\newtheorem{theorem}{Theorem}
\theoremstyle{plain}
\newtheorem{acknowledgement}{Acknowledgement}
\newtheorem{definition}{Definition}
\newtheorem{lemma}{Lemma}
\newtheorem{remark}{Remark}
\numberwithin{equation}{section}
\begin{document}
\title[Data Dependence of fixed points of...]{Data dependence results of a
new multistep and S-iterative schemes for contractive-like operators}
\author{Faik GURSOY}
\address{Department of Mathematics, Yildiz Technical University, Davutpasa
Campus, Esenler, 34220 Istanbul, Turkey}
\email{faikgursoy02@hotmail.com;fgursoy@yildiz.edu.tr}
\urladdr{http://www.yarbis.yildiz.edu.tr/fgursoy}
\author{Vatan KARAKAYA}
\curraddr{Department of Mathematical Engineering, Yildiz Technical
University, Davutpasa Campus, Esenler, 34210 Istanbul}
\email{vkkaya@yildiz.edu.tr;vkkaya@yahoo.com}
\urladdr{http://www.yarbis.yildiz.edu.tr/vkkaya}
\author{B. E. RHOADES}
\address{Department of Mathematics, Indiana University, Bloomington, IN
47405-7106, USA}
\email{rhoades@indiana.edu}
\urladdr{http://www.math.indiana.edu/people/profile.phtml?id=rhoades}
\date{December,2012}
\subjclass[2000]{Primary 05C38, 15A15; Secondary 05A15, 15A18}
\keywords{New multistep iteration, S-iteration, Data dependence,
Contractive-like operator.}

\begin{abstract}
In this paper, we prove that convergence of a new iteration and S-iteration
can be used approximate the fixed points of contractive-like operators. We
also prove some data dependence results for these new iteration and
S-iteration schemes for contractive-like operators. Our results extend and
improve some known results in the literature.
\end{abstract}

\maketitle

\section{Introduction}

Contractive mappings and iteration procedures are some of the main tools in
the study of fixed point theory. There are many contractive mappings and
iteration schemes that have been introduced and developed by several authors
to serve various purposes in the literature of this highly active research
area, viz., \ \cite{Rhoades, Mann, Ishikawa, Noor, RS7, Thianwan, SP,
Glowinski, XuNoor, Takahashi, Das, Agarwall}, among others.

Whether an iteration method used in any investigation converges to a fixed
point of a contractive type mapping corresponding to a particular iteration
process is of utmost importance. Therefore it is natural to see many works
related to convergence of iteration methods, such as \cite{Berinde, Chidume,
Chidume1, Suantai, Sahu, Isa, CR, Osilike, Rafiq, Hussain}.

Fixed point theory is concerned with investigating a wide variety of issues
such as the existence (and uniqueness) of fixed points, the construction of
fixed points, etc. One of these themes is data dependency of fixed points.
Data dependency of fixed points has been the subject of research in fixed
point theory for some time now, and data dependence research is an important
theme in its own right.

Several authors who have made contributions to the study of data dependence
of fixed points are Rus and Muresan \cite{Rus1}, Rus et al. \cite{Rus2, Rus3}%
, Berinde \cite{VBerinde}, Esp\'{\i}nola and Petru\c{s}el \cite{Espinola},
Markin \cite{Markin}, Chifu and Petru\c{s}el \cite{Chifu}, Olantiwo \cite%
{Olantiwo, Olantiwo1}, \c{S}oltuz \cite{DataM, Data Is 1}, \c{S}oltuz and
Grosan \cite{Data Is 2}, Chugh and Kumar \cite{DataSP} and the references
therein.

This paper is organized as follows. In Section 1 we present a brief survey
of some known contractive mappings and iterative schemes and collect some
preliminaries that will be used in the proofs of our main results. In
Section 2 we show that the convergence of a new multi-step iteration, which
is a special case of Jungck multistep-SP iterative process defined in \cite%
{Akewe} and S-iteration(due to Agarwal et al.) can be used approximate the
fixed points of contractive-like operators. Motivated by the works of \c{S}%
oltuz \cite{DataM, Data Is 1}, \c{S}oltuz and Grosan \cite{Data Is 2}, and
Chugh and Kumar \cite{DataSP}, we prove two data dependence results for the
new multi-step iteration and S-iteration schemes by employing
contractive-like operators.

As a background of our exposition, we now mention some contractive mappings
and iteration schemes.

In \cite{Zamfirescu} Zamfirescu established an important generalization of
the Banach fixed point theorem using the following contractive condition:
For a mapping $T:E\rightarrow E$, there exist real numbers $a$, $b$, $c$
satisfying $0<a<1$, $0<b$, $c<1/2$ such that, for each pair $x$, $y\in X$,
at least one of the following is true:%
\begin{equation}
\left\{ 
\begin{array}{c}
\text{(z}_{\text{1}}\text{) \ \ \ \ \ \ \ \ \ \ \ \ \ \ \ \ \ \ \ \ \ \ \ \ }%
\left\Vert Tx-Ty\right\Vert \leq a\left\Vert x-y\right\Vert \text{,} \\ 
\text{(z}_{\text{2}}\text{) \ \ \ \ }\left\Vert Tx-Ty\right\Vert \leq
b\left( \left\Vert x-Tx\right\Vert +\left\Vert y-Ty\right\Vert \right) \text{%
,} \\ 
\text{(z}_{\text{3}}\text{) \ \ \ \ }\left\Vert Tx-Ty\right\Vert \leq
c\left( \left\Vert x-Ty\right\Vert +\left\Vert y-Tx\right\Vert \right) \text{%
.}%
\end{array}%
\right.  \label{eqn1}
\end{equation}%
A mapping $T$ satisfying the contractive conditions (z$_{\text{1}}$), (z$_{%
\text{2}}$) and (z$_{\text{3}}$) in (1.1) is called a Zamfirescu operator.
An operator satisfying condition (z$_{\text{2}}$) is called a \textit{Kannan
operator}, while the mapping satisfying condition (z$_{\text{3}}$) is called
a \textit{Chatterjea operator}. As shown in \cite{Berinde}, the contractive
condition (1.1) leads to%
\begin{equation}
\left\{ 
\begin{array}{c}
\text{(b}_{\text{1}}\text{) \ \ \ \ }\left\Vert Tx-Ty\right\Vert \leq \delta
\left\Vert x-y\right\Vert +2\delta \left\Vert x-Tx\right\Vert \text{ if one
use (z}_{\text{2}}\text{),} \\ 
\text{and \ \ \ \ \ \ \ \ \ \ \ \ \ \ \ \ \ \ \ \ \ \ \ \ \ \ \ \ \ \ \ \ \
\ \ \ \ \ \ \ \ \ \ \ \ \ \ \ \ \ \ \ \ \ \ \ \ \ \ \ \ \ \ \ \ \ \ \ \ \ \
\ \ \ \ \ \ \ \ \ \ \ \ \ \ } \\ 
\text{(b}_{\text{2}}\text{) \ \ \ \ }\left\Vert Tx-Ty\right\Vert \leq \delta
\left\Vert x-y\right\Vert +2\delta \left\Vert x-Ty\right\Vert \text{ if one
use (z}_{\text{3}}\text{),}%
\end{array}%
\right.  \label{eqn2}
\end{equation}%
for all $x$, $y\in E$ where $\delta :=\max \left\{ a,\frac{b}{1-b},\frac{c}{%
1-c}\right\} $, $\delta \in \left[ 0,1\right) $, and it was shown that this
class of operators is wider than the class of Zamfirescu operators. Any
mapping satisfying condition (b$_{\text{1}}$) or (b$_{\text{2}}$) is called
a quasi-contractive operator.

Extending the above definition, Osilike and Udomene \cite{Osilike}
considered operators $T$ for which there exist real numbers $L\geq 0$ and $%
\delta \in \left[ 0,1\right) $ such that for all $x$, $y\in E$, 
\begin{equation}
\left\Vert Tx-Ty\right\Vert \leq \delta \left\Vert x-y\right\Vert
+L\left\Vert x-Tx\right\Vert \text{.}  \label{eqn3}
\end{equation}%
Imoru and Olantiwo \cite{Imoru} gave a more general definition: The operator 
$T$\ is called a contractive-like operator if there exists a constant $%
\delta \in \left[ 0,1\right) $\ and a strictly increasing and continuous
function $\varphi :\left[ 0,\infty \right) \rightarrow \left[ 0,\infty
\right) $\ with $\varphi \left( 0\right) =0$,\ such that, for each $x,y\in E$%
,%
\begin{equation}
\left\Vert Tx-Ty\right\Vert \leq \delta \left\Vert x-y\right\Vert +\varphi
\left( \left\Vert x-Tx\right\Vert \right) \text{.}  \label{eqn4}
\end{equation}%
A map satisfying (1.4) need not have a fixed point, even if $E$ is complete.
For example, let $E=\left[ 0,\infty \right) $, and define $T$ by%
\begin{equation*}
Tx=\left\{ 
\begin{array}{c}
1.0\text{,\ \ \ \ \ \ }0\leq x\leq 0.8\text{, \ \ \ \ } \\ 
0.6\text{, \ \ \ }0.8<x\text{. \ \ \ \ \ \ \ \ \ \ \ \ }%
\end{array}%
\right.
\end{equation*}%
Without loss of generality we may assume that $x<y$. Then, for $0\leq
x<y\leq 0.8$ or $0.8<x<y$, $\left\Vert Tx-Ty\right\Vert =0$, and (1.4)
automatically satisfied.

If $0\leq x\leq 0.8<y$, then $\left\Vert Tx-Ty\right\Vert =0.4$.

Define $\varphi $ by $\varphi \left( t\right) =Lt$ for any $L\geq 2$. Then $%
\varphi $ is increasing, continuous, and $\varphi \left( 0\right) =0$. Also, 
$\left\Vert x-Tx\right\Vert =1-x$, so that $\varphi \left( \left\Vert
x-Tx\right\Vert \right) =L\left( 1-x\right) \geq 0.2L\geq 0.4$.

Therefore%
\begin{equation*}
0.4=\left\Vert Tx-Ty\right\Vert \leq L\left\Vert x-Tx\right\Vert \leq \delta
\left\Vert x-y\right\Vert +L\left\Vert x-Tx\right\Vert
\end{equation*}%
for any $0\leq \delta <1$, and (1.4) is satisfied for $0\leq x\leq 0.8<y$.
But $T$ has no fixed point.

However, using (1.4) it is obvious that, if $T$ has a fixed point, then it
is unique.

Throughout this paper $%
\mathbb{N}
$ denotes the set of all nonnegative integers. Let $X$ be a Banach space, $%
E\subset X$ a nonempty closed, convex subset of $X$, and $T$ a self map on $%
E $. Define $F_{T}:=\left\{ p\in X:~p=Tp\right\} $ to be the set of fixed
points of $T$. Let $\left\{ \alpha _{n}\right\} _{n=0}^{\infty }$, $\left\{
\beta _{n}\right\} _{n=0}^{\infty }$, $\left\{ \gamma _{n}\right\}
_{n=0}^{\infty }$ and $\left\{ \beta _{n}^{i}\right\} _{n=0}^{\infty }$, $i=%
\overline{1,k-2}$, $k\geq 2$ be real sequences in $\left[ 0,1\right) $
satisfying certain conditions.

In \cite{RS7} Rhoades and \c{S}oltuz introduced a multi-step iterative
procedure defined by%
\begin{equation}
\left\{ 
\begin{array}{c}
x_{0}\in E\text{, \ \ \ \ \ \ \ \ \ \ \ \ \ \ \ \ \ \ \ \ \ \ \ \ \ \ \ \ \
\ \ \ \ \ \ \ \ \ \ \ \ \ \ \ \ \ \ \ \ \ } \\ 
y_{n}^{k-1}=\left( 1-\beta _{n}^{k-1}\right) x_{n}+\beta
_{n}^{k-1}Tx_{n},~k\geq 2\text{, \ \ \ \ \ } \\ 
y_{n}^{i}=\left( 1-\beta _{n}^{i}\right) x_{n}+\beta _{n}^{i}Ty_{n}^{i+1}%
\text{,}~i=\overline{1,k-2}\text{,} \\ 
x_{n+1}=\left( 1-\alpha _{n}\right) x_{n}+\alpha _{n}Ty_{n}^{1}\text{, \ }%
n\in 
\mathbb{N}
\text{. \ \ \ \ \ \ \ \ \ \ \ }%
\end{array}%
\right.  \label{eqn5}
\end{equation}%
The sequence $\left\{ x_{n}\right\} _{n=0}^{\infty }$ defined by%
\begin{equation}
\left\{ 
\begin{array}{c}
x_{0}\in E\text{, \ \ \ \ \ \ \ \ \ \ \ \ \ \ \ \ \ \ \ \ \ \ \ \ \ \ \ \ \
\ \ \ \ \ \ \ \ \ \ \ \ } \\ 
x_{n+1}=\left( 1-\alpha _{n}\right) Tx_{n}+\alpha _{n}Ty_{n}\text{, \ \ \ \
\ \ \ \ \ \ \ \ } \\ 
y_{n}=\left( 1-\beta _{n}\right) x_{n}+\beta _{n}Tx_{n}\text{,}~n\in 
\mathbb{N}
\text{, \ }%
\end{array}%
\right.  \label{eqn6}
\end{equation}%
is known as the S-iteration process (see \cite{Sahu, Agarwal, Agarwall}).

S.Thianwan \cite{Thianwan} defined a two-step iteration $\left\{
u_{n}\right\} _{n=0}^{\infty }$ by 
\begin{equation}
\left\{ 
\begin{array}{c}
x_{0}\in E\text{, \ \ \ \ \ \ \ \ \ \ \ \ \ \ \ \ \ \ \ \ \ \ \ \ \ \ \ \ \
\ \ \ \ \ \ \ \ \ \ \ \ } \\ 
x_{n+1}=\left( 1-\alpha _{n}\right) y_{n}+\alpha _{n}Ty_{n}\text{, \ \ \ \ \
\ \ \ \ \ \ \ } \\ 
y_{n}=\left( 1-\beta _{n}\right) x_{n}+\beta _{n}Tx_{n}\text{,}~n\in 
\mathbb{N}
\text{.}%
\end{array}%
\right.  \label{eqn7}
\end{equation}%
Recently Phuengrattana and Suantai \cite{SP} introduced an SP iteration
method defined by%
\begin{equation}
\left\{ 
\begin{array}{c}
x_{0}\in E\text{, \ \ \ \ \ \ \ \ \ \ \ \ \ \ \ \ \ \ \ \ \ \ \ \ \ \ \ \ \
\ \ \ \ \ \ } \\ 
x_{n+1}=\left( 1-\alpha _{n}\right) y_{n}+\alpha _{n}Ty_{n}\text{, \ \ \ \ \
\ \ \ \ \ \ \ \ } \\ 
y_{n}=\left( 1-\beta _{n}\right) z_{n}+\beta _{n}Tz_{n}\text{, \ \ \ \ \ \ \
\ \ \ } \\ 
z_{n}=\left( 1-\gamma _{n}\right) x_{n}+\gamma _{n}Tx_{n},~n\in 
\mathbb{N}
\text{.}%
\end{array}%
\right.  \label{eqn8}
\end{equation}%
We shall employ the following iterative process. For an arbitrary fixed
order $k\geq 2$,

\begin{equation}
\left\{ 
\begin{array}{c}
x_{0}\in E\text{, \ \ \ \ \ \ \ \ \ \ \ \ \ \ \ \ \ \ \ \ \ \ \ \ \ \ \ \ \
\ \ \ \ \ \ \ \ \ \ \ \ \ \ \ \ } \\ 
x_{n+1}=\left( 1-\alpha _{n}\right) y_{n}^{1}+\alpha _{n}Ty_{n}^{1}\text{, \
\ \ \ \ \ \ \ \ \ \ \ \ \ \ \ \ \ } \\ 
y_{n}^{1}=\left( 1-\beta _{n}^{1}\right) y_{n}^{2}+\beta _{n}^{1}Ty_{n}^{2}%
\text{, \ \ \ \ \ \ \ \ \ \ \ \ \ \ } \\ 
y_{n}^{2}=\left( 1-\beta _{n}^{2}\right) y_{n}^{3}+\beta _{n}^{2}Ty_{n}^{3}%
\text{, \ \ \ \ \ \ \ \ \ \ \ \ \ \ } \\ 
\cdots \text{ \ \ \ \ \ \ \ \ \ \ \ \ \ \ \ \ \ \ \ \ \ \ \ \ \ \ \ \ \ \ \
\ \ \ \ \ \ \ \ \ } \\ 
y_{n}^{k-2}=\left( 1-\beta _{n}^{k-2}\right) y_{n}^{k-1}+\beta
_{n}^{k-2}Ty_{n}^{k-1}\text{, \ \ \ \ } \\ 
y_{n}^{k-1}=\left( 1-\beta _{n}^{k-1}\right) x_{n}+\beta
_{n}^{k-1}Tx_{n},~n\in 
\mathbb{N}
\text{,}%
\end{array}%
\right.  \label{eqn9}
\end{equation}%
or, in short,%
\begin{equation}
\left\{ 
\begin{array}{c}
x_{0}\in E\text{, \ \ \ \ \ \ \ \ \ \ \ \ \ \ \ \ \ \ \ \ \ \ \ \ \ \ \ \ \
\ \ \ \ \ \ \ \ \ \ \ \ \ \ \ \ \ \ \ \ \ \ } \\ 
x_{n+1}=\left( 1-\alpha _{n}\right) y_{n}^{1}+\alpha _{n}Ty_{n}^{1}\text{, \
\ \ \ \ \ \ \ \ \ \ \ \ \ \ \ \ \ \ \ \ \ \ \ \ } \\ 
y_{n}^{i}=\left( 1-\beta _{n}^{i}\right) y_{n}^{i+1}+\beta
_{n}^{i}Ty_{n}^{i+1}\text{,}~~i=\overline{1,k-2}\text{, } \\ 
y_{n}^{k-1}=\left( 1-\beta _{n}^{k-1}\right) x_{n}+\beta
_{n}^{k-1}Tx_{n},~n\in 
\mathbb{N}
\text{, \ \ \ \ \ \ \ }%
\end{array}%
\right.  \label{eqn10}
\end{equation}%
where 
\begin{equation}
\left\{ \alpha _{n}\right\} _{n=0}^{\infty }\subset \left[ 0,1\right) \text{%
, }\sum\limits_{n=0}^{\infty }\alpha _{n}=\infty \text{,}  \label{eqn11}
\end{equation}%
and%
\begin{equation}
\left\{ \beta _{n}^{i}\right\} _{n=0}^{\infty }\subset \left[ 0,1\right) 
\text{, }i=\overline{1,k-1}\text{.}  \label{eqn12}
\end{equation}

\begin{remark}
If $\gamma _{n}=0$, then SP iteration $(1.8)$ reduces to the two-step
iteration $(1.7)$. By taking $k=3$ and $k=2$ in $(1.10)$ we obtain the
iterations $(1.8)$ and $(1.7)$, respectively.
\end{remark}

We shall need following definition and lemma in the sequel.

\begin{definition}
\cite{Vasile} Let $T$,$\widetilde{T}:X\rightarrow X$ be two operators. We
say that $\widetilde{T}$ is an approximate operator for $T$ if, for some $%
\varepsilon >0$, we have%
\begin{equation*}
\left\Vert Tx-\widetilde{T}x\right\Vert \leq \varepsilon \text{,}
\end{equation*}%
for all $x\in X$.
\end{definition}

\begin{lemma}
\cite{Data Is 2} Let $\left\{ a_{n}\right\} _{n=0}^{\infty }$ be a
nonnegative sequence for which one assumes that there exists an $n_{0}\in 
\mathbb{N}
$, such that, for all $n\geq n_{0}$,%
\begin{equation*}
a_{n+1}\leq \left( 1-\mu _{n}\right) a_{n}+\mu _{n}\eta _{n}
\end{equation*}%
is satisfied, where $\mu _{n}\in \left( 0,1\right) $, for all $n\in 
\mathbb{N}
$, $\sum\limits_{n=0}^{\infty }\mu _{n}=\infty $ and $\eta _{n}\geq 0$, $%
\forall n\in 
\mathbb{N}
$. Then the following holds:%
\begin{equation*}
0\leq \lim_{n\rightarrow \infty }\sup a_{n}\leq \lim_{n\rightarrow \infty
}\sup \eta _{n}\text{.}
\end{equation*}
\end{lemma}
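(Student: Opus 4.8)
The plan is to prove the lower bound trivially and to obtain the upper bound by absorbing $\eta_n$ into a constant and iterating the recursion. The lower bound $0 \le \limsup_{n\to\infty} a_n$ is immediate, since every $a_n \ge 0$. For the upper bound I would first dispose of the trivial case: if $\limsup_{n\to\infty}\eta_n = +\infty$ there is nothing to prove, so assume $L := \limsup_{n\to\infty}\eta_n < \infty$.

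Next, fix an arbitrary $\varepsilon > 0$. By the definition of $\limsup$ there is an index $N \ge n_0$ such that $\eta_n \le L + \varepsilon$ for every $n \ge N$. Writing $c := L + \varepsilon$ and inserting this bound into the hypothesis gives, for all $n \ge N$,
\[
a_{n+1} \le (1-\mu_n)a_n + \mu_n c .
\]
Subtracting $c$ from both sides and using $\mu_n \in (0,1)$ yields the one-step estimate
\[
a_{n+1} - c \le (1-\mu_n)\,(a_n - c),
\]
and iterating this from $N$ up to $n$ produces
\[
a_{n+1} - c \le \Bigl(\prod_{k=N}^{n}(1-\mu_k)\Bigr)(a_N - c).
\]

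The crux of the argument is that the product on the right tends to $0$, and this is exactly where the hypothesis $\sum_{n=0}^{\infty}\mu_n = \infty$ enters. Using the elementary inequality $1-x \le e^{-x}$, valid for $x \in [0,1)$, one has $\prod_{k=N}^{n}(1-\mu_k) \le \exp\bigl(-\sum_{k=N}^{n}\mu_k\bigr) \to 0$ as $n \to \infty$. Since $a_N - c$ is a fixed constant, the whole right-hand side tends to $0$, irrespective of its sign, so passing to the upper limit in the last inequality gives $\limsup_{n\to\infty} a_n \le c = L + \varepsilon$. As $\varepsilon > 0$ was arbitrary, letting $\varepsilon \to 0$ delivers $\limsup_{n\to\infty} a_n \le L = \limsup_{n\to\infty}\eta_n$.

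I expect the only point requiring genuine care to be the passage $\prod_{k=N}^{n}(1-\mu_k)\to 0$; everything else is routine algebra. The one subtlety worth flagging is that I deliberately avoid splitting into cases according to the sign of $a_N - c$: because the product factor vanishes in the limit, the single displayed bound simultaneously handles the case in which the sequence already lies below $c$ and the case in which it must still decay toward $c$.
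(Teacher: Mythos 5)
Your argument is correct: the lower bound is trivial from $a_n\geq 0$, the subtraction of $c=L+\varepsilon$ turns the recursion into the telescoping estimate $a_{n+1}-c\leq\prod_{k=N}^{n}(1-\mu_k)\,(a_N-c)$, and the divergence of $\sum\mu_n$ together with $1-x\leq e^{-x}$ kills the product, so $\limsup a_n\leq L+\varepsilon$ for every $\varepsilon>0$; your remark that the sign of $a_N-c$ is irrelevant is exactly the right observation. The paper itself gives no proof of this lemma --- it is quoted with a citation to \c{S}oltuz and Grosan --- and your proof is essentially the standard argument found in that source, so there is nothing to compare beyond noting agreement.
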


\section{Main Results}

For simplicity we use the following notation through this section.

For any iterative process, $\left\{ x_{n}\right\} _{n=0}^{\infty }$ and $%
\left\{ u_{n}\right\} _{n=0}^{\infty }$ denote iterative sequences
associated to $T$ and $\widetilde{T}$, respectively.

\begin{theorem}
Let $T:E\rightarrow E$ be a map satisfying $\left( 1.4\right) $ with $%
F_{T}\neq \emptyset $ and $\left\{ x_{n}\right\} _{n=0}^{\infty }$ is a
sequence defined by $(1.10)$, then the sequence $\left\{ x_{n}\right\}
_{n=0}^{\infty }$ converges to the unique fixed point of $T$.
\end{theorem}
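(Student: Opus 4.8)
The plan is to exploit the observation that comparing any iterate against the fixed point annihilates the error term $\varphi$ in $(1.4)$, collapsing the contractive-like condition into an ordinary one-sided contraction along the orbit. First I would fix $p\in F_{T}$ (which is unique, by the remark following $(1.4)$) and specialize $(1.4)$ by taking $x=p$. Since $\left\Vert p-Tp\right\Vert =0$ and $\varphi (0)=0$, this yields
\[
\left\Vert Ty-p\right\Vert =\left\Vert Tp-Ty\right\Vert \leq \delta \left\Vert p-y\right\Vert +\varphi (0)=\delta \left\Vert y-p\right\Vert ,
\]
valid for every $y\in E$. This single inequality is the engine of the argument, because each line of $(1.10)$ is a convex combination of a point and its image under $T$.

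Next I would propagate this estimate up the tower of auxiliary iterates. Starting from the bottom line $y_{n}^{k-1}=\left( 1-\beta _{n}^{k-1}\right) x_{n}+\beta _{n}^{k-1}Tx_{n}$ and using convexity of the norm together with the key estimate, I would obtain $\left\Vert y_{n}^{k-1}-p\right\Vert \leq \left[ 1-\beta _{n}^{k-1}(1-\delta )\right] \left\Vert x_{n}-p\right\Vert$. Repeating the identical computation for $y_{n}^{i}=\left( 1-\beta _{n}^{i}\right) y_{n}^{i+1}+\beta _{n}^{i}Ty_{n}^{i+1}$, $i=\overline{1,k-2}$, and finally for $x_{n+1}=\left( 1-\alpha _{n}\right) y_{n}^{1}+\alpha _{n}Ty_{n}^{1}$, then chaining the resulting inequalities, I would arrive at
\[
\left\Vert x_{n+1}-p\right\Vert \leq \left[ 1-\alpha _{n}(1-\delta )\right] \prod_{i=1}^{k-1}\left[ 1-\beta _{n}^{i}(1-\delta )\right] \left\Vert x_{n}-p\right\Vert .
\]
Since $\beta _{n}^{i}\in \lbrack 0,1)$ and $1-\delta >0$, every factor in the product lies in $(0,1]$ and may be bounded above by $1$, giving the clean recursion $\left\Vert x_{n+1}-p\right\Vert \leq \left[ 1-\alpha _{n}(1-\delta )\right] \left\Vert x_{n}-p\right\Vert $.

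Finally I would close the argument by iterating this recursion to get $\left\Vert x_{n+1}-p\right\Vert \leq \prod_{j=0}^{n}\left[ 1-\alpha _{j}(1-\delta )\right] \left\Vert x_{0}-p\right\Vert$ and applying the elementary bound $1-t\leq e^{-t}$, which yields $\prod_{j=0}^{n}\left[ 1-\alpha _{j}(1-\delta )\right] \leq \exp \left( -(1-\delta )\sum_{j=0}^{n}\alpha _{j}\right)$. Because $1-\delta >0$ and $\sum \alpha _{j}=\infty $ by $(1.11)$, the right-hand side tends to $0$, forcing $\left\Vert x_{n}-p\right\Vert \rightarrow 0$; alternatively one could apply Lemma 1 to $a_{n}=\left\Vert x_{n}-p\right\Vert$ with $\mu _{n}=\alpha _{n}(1-\delta )$ and $\eta _{n}=0$. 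I do not anticipate a genuine obstacle here. The only points requiring care are the bookkeeping of the nested estimates through all $k-1$ auxiliary sequences and the recognition that the $\varphi$-term vanishes \emph{identically} once one argument is the fixed point, so that $\varphi$ plays no role whatsoever in the convergence.
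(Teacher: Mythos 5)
Your proposal is correct and complete; it is essentially the standard argument that the paper itself omits, deferring instead to the citation of Akewe's Theorem 3.1 (the key steps there are exactly yours: the $\varphi$-term vanishes when one argument is the fixed point, each level of the tower contracts by a factor $1-\beta_n^i(1-\delta)\le 1$, and $\sum\alpha_n=\infty$ with $1-t\le e^{-t}$ forces the product to zero). One minor caution on your alternative closing step: Lemma 1 requires $\mu_n\in(0,1)$, which fails if some $\alpha_n=0$, so the direct product estimate you give first is the safer route.
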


\begin{proof}
The proof can be easily obtained by using argument in the proof of (\cite%
{Akewe}, Theorem 3.1).
\end{proof}

This result allow us to prove the following theorem.

\begin{theorem}
Let $T:E\rightarrow E$ be a map satisfying $(1.4)$ with $F_{T}\neq \emptyset 
$ and $\widetilde{T}$ be an approximate operator of $T$ as in the Definition
1. Let $\left\{ x_{n}\right\} _{n=0}^{\infty }$, $\left\{ u_{n}\right\}
_{n=0}^{\infty }$ be two iterative sequences defined by $(1.10)$ and with
real sequences $\left\{ \alpha _{n}\right\} _{n=0}^{\infty }$, $\left\{
\beta _{n}^{i}\right\} _{n=0}^{\infty }$ $\subset \left[ 0,1\right) $
satisfying (i) $0\leq \beta _{n}^{i}<\alpha _{n}\leq 1$, $i=\overline{1,k-1}$%
, (ii) $\sum \alpha _{n}=\infty $. If $p=Tp$\ and $q=\widetilde{T}q$, then
we have%
\begin{equation*}
\left\Vert p-q\right\Vert \leq \frac{k\varepsilon }{1-\delta }\text{.}
\end{equation*}
\end{theorem}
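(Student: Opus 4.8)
The plan is to run the two iterations $\{x_n\}$ (generated by $T$) and $\{u_n\}$ (generated by $\widetilde{T}$) in parallel and to extract a single scalar recursion for $a_n := \|x_n - u_n\|$ of exactly the shape required by Lemma 1, namely $a_{n+1} \le (1-\mu_n)a_n + \mu_n\eta_n$ with $\mu_n = (1-\delta)\alpha_n$. Writing $v_n^i$ for the intermediate iterates of the $\widetilde{T}$-process (the analogues of $y_n^i$), I would first bound $\|y_n^i - v_n^i\|$ level by level, starting from the innermost relation between $y_n^{k-1}$ and $v_n^{k-1}$ and propagating outward to $x_{n+1}$ and $u_{n+1}$.

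At each level the estimate combines three ingredients: convexity of the averaging step (so the coefficients $1-\beta_n^i$ and $\beta_n^i$ appear), the contractive-like inequality $(1.4)$ applied to $\|Ty_n^{i+1}-Tv_n^{i+1}\|$, which produces a $\delta\|y_n^{i+1}-v_n^{i+1}\|$ term plus a $\varphi$-term, and the approximate-operator bound $\|T\cdot-\widetilde{T}\cdot\|\le\varepsilon$ inserted through the triangle inequality $\|Ty_n^{i+1}-\widetilde{T}v_n^{i+1}\|\le\|Ty_n^{i+1}-Tv_n^{i+1}\|+\varepsilon$. Telescoping these inequalities from $i=k-1$ down to $i=1$ bounds $\|y_n^1-v_n^1\|$ by $\|x_n-u_n\|$ plus an accumulation of $k-1$ terms $\beta_n^i\varepsilon$ and $k-1$ terms $\beta_n^i\varphi(\cdot)$; the final outer step for $x_{n+1}-u_{n+1}$ contributes one more $\alpha_n\varepsilon$ and one more $\alpha_n\varphi(\cdot)$ while supplying the crucial contraction factor $1-(1-\delta)\alpha_n$. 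This is precisely where hypothesis (i), $\beta_n^i<\alpha_n$, earns its place: it lets me replace every $\beta_n^i\varepsilon$ by $\alpha_n\varepsilon$, so that all $k$ of the $\varepsilon$-contributions collapse into $k\alpha_n\varepsilon$, and likewise each $\beta_n^i\varphi(\cdot)$ by $\alpha_n\varphi(\cdot)$.

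The resulting recursion reads $a_{n+1}\le(1-(1-\delta)\alpha_n)a_n + k\alpha_n\varepsilon + \alpha_n\Psi_n$, where $\Psi_n$ is a sum of $k$ terms $\varphi(\|w_n-Tw_n\|)$, with $w_n$ ranging over $x_n$ and the $y_n^i$. Factoring out $\mu_n=(1-\delta)\alpha_n$ puts this in Lemma 1 form with $\eta_n=(k\varepsilon+\Psi_n)/(1-\delta)$; note $\mu_n\in(0,1)$ since $0<\alpha_n<1$ by (i) and $(1.11)$, and $\sum\mu_n=(1-\delta)\sum\alpha_n=\infty$ by (ii). To identify $\limsup\eta_n$ I need $\Psi_n\to0$: Theorem 1 gives $x_n\to p$, and applying $(1.4)$ at the fixed point $p$ (where $\varphi(0)=0$) yields $\|x_n-Tx_n\|\le(1+\delta)\|x_n-p\|\to0$; an easy induction up the levels, using $\|Ty_n^i-Tp\|\le\delta\|y_n^i-p\|$, shows each $y_n^i\to p$ and hence $\|y_n^i-Ty_n^i\|\to0$, so by continuity of $\varphi$ every summand of $\Psi_n$ vanishes. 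Lemma 1 then gives $\limsup_n\|x_n-u_n\|\le\limsup_n\eta_n=k\varepsilon/(1-\delta)$.

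Finally I would pass to the limit: $x_n\to p$ by Theorem 1 and $u_n\to q$ (the $\widetilde{T}$-iterate converging to its fixed point), so $\|x_n-u_n\|\to\|p-q\|$ and the estimate $\|p-q\|\le k\varepsilon/(1-\delta)$ drops out. I expect the main obstacle to be the bookkeeping that forces the $\varphi$-terms to carry a factor $\alpha_n$ rather than a bare constant: without that factor the quotient $\Psi_n/\alpha_n$ hidden in $\eta_n$ could fail to vanish and the clean limit $k\varepsilon/(1-\delta)$ would be lost. Hypothesis (i) is exactly the structural device that guarantees it, and deliberately retaining one undiminished contraction factor $1-(1-\delta)\alpha_n$ at the outermost step, rather than absorbing it into the telescope, is what keeps the recursion in usable Lemma 1 form.
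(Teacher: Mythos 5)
Your proposal follows essentially the same route as the paper's own proof: the same level-by-level telescoping of $\Vert y_n^i - v_n^i\Vert$ via convexity, condition (1.4), and the $\varepsilon$-approximation bound, the same use of hypothesis (i) to replace each $\beta_n^i$ by $\alpha_n$ so the recursion lands in Lemma 1 form with $\mu_n=(1-\delta)\alpha_n$, and the same appeal to Theorem 1 and continuity of $\varphi$ to kill the $\varphi$-terms. If anything, you are slightly more explicit than the paper in spelling out the final passage from $\limsup\Vert x_n-u_n\Vert$ to $\Vert p-q\Vert$.
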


\begin{proof}
For a given $x_{0}\in E$ and $u_{0}\in E$ we consider following multistep
iteration for $T$ and $\widetilde{T}$:%
\begin{equation}
\left\{ 
\begin{array}{c}
x_{0}\in E\text{, \ \ \ \ \ \ \ \ \ \ \ \ \ \ \ \ \ \ \ \ \ \ \ \ \ \ \ \ \
\ \ \ \ \ \ \ \ \ \ \ \ \ \ \ \ \ \ \ \ \ \ } \\ 
x_{n+1}=\left( 1-\alpha _{n}\right) y_{n}^{1}+\alpha _{n}Ty_{n}^{1}\text{, \
\ \ \ \ \ \ \ \ \ \ \ \ \ \ \ \ \ \ \ \ \ \ \ \ } \\ 
y_{n}^{i}=\left( 1-\beta _{n}^{i}\right) y_{n}^{i+1}+\beta
_{n}^{i}Ty_{n}^{i+1}\text{,}~~i=\overline{1,k-2}\text{, } \\ 
y_{n}^{k-1}=\left( 1-\beta _{n}^{k-1}\right) x_{n}+\beta _{n}^{k-1}Tx_{n}%
\text{, }k\geq 2\text{, }n\in 
\mathbb{N}
\text{, \ \ \ \ \ \ \ }%
\end{array}%
\right.  \label{eqn25}
\end{equation}%
and%
\begin{equation}
\left\{ 
\begin{array}{c}
u_{0}\in E\text{, \ \ \ \ \ \ \ \ \ \ \ \ \ \ \ \ \ \ \ \ \ \ \ \ \ \ \ \ \
\ \ \ \ \ \ \ \ \ \ \ \ \ \ \ \ \ \ \ \ \ \ } \\ 
u_{n+1}=\left( 1-\alpha _{n}\right) v_{n}^{1}+\alpha _{n}\widetilde{T}%
v_{n}^{1}\text{, \ \ \ \ \ \ \ \ \ \ \ \ \ \ \ \ \ \ \ \ \ \ \ \ \ } \\ 
v_{n}^{i}=\left( 1-\beta _{n}^{i}\right) v_{n}^{i+1}+\beta _{n}^{i}%
\widetilde{T}v_{n}^{i+1}\text{,}~~i=\overline{1,k-2}\text{, } \\ 
v_{n}^{k-1}=\left( 1-\beta _{n}^{k-1}\right) u_{n}+\beta _{n}^{k-1}%
\widetilde{T}u_{n}\text{, }k\geq 2\text{, }n\in 
\mathbb{N}
\text{. \ \ \ \ \ \ \ }%
\end{array}%
\right.  \label{eqn26}
\end{equation}%
Then, from (1.4), (2.1) and (2.2), we have the following estimates.%
\begin{eqnarray}
\left\Vert x_{n+1}-u_{n+1}\right\Vert &=&\left\Vert \left( 1-\alpha
_{n}\right) \left( y_{n}^{1}-v_{n}^{1}\right) +\alpha _{n}\left( Ty_{n}^{1}-%
\widetilde{T}v_{n}^{1}\right) \right\Vert  \notag \\
&\leq &\left( 1-\alpha _{n}\right) \left\Vert y_{n}^{1}-v_{n}^{1}\right\Vert
+\alpha _{n}\left\Vert Ty_{n}^{1}-\widetilde{T}v_{n}^{1}\right\Vert  \notag
\\
&=&\left( 1-\alpha _{n}\right) \left\Vert y_{n}^{1}-v_{n}^{1}\right\Vert
+\alpha _{n}\left\Vert Ty_{n}^{1}-Tv_{n}^{1}+Tv_{n}^{1}-\widetilde{T}%
v_{n}^{1}\right\Vert  \notag \\
&\leq &\left( 1-\alpha _{n}\right) \left\Vert y_{n}^{1}-v_{n}^{1}\right\Vert
+\alpha _{n}\left\Vert Ty_{n}^{1}-Tv_{n}^{1}\right\Vert +\alpha
_{n}\left\Vert Tv_{n}^{1}-\widetilde{T}v_{n}^{1}\right\Vert  \notag \\
&\leq &\left( 1-\alpha _{n}\right) \left\Vert y_{n}^{1}-v_{n}^{1}\right\Vert
+\alpha _{n}\delta \left\Vert y_{n}^{1}-v_{n}^{1}\right\Vert +\alpha
_{n}\varphi \left( \left\Vert y_{n}^{1}-Ty_{n}^{1}\right\Vert \right)
+\alpha _{n}\varepsilon  \notag \\
&=&\left[ 1-\alpha _{n}\left( 1-\delta \right) \right] \left\Vert
y_{n}^{1}-v_{n}^{1}\right\Vert +\alpha _{n}\varphi \left( \left\Vert
y_{n}^{1}-Ty_{n}^{1}\right\Vert \right) +\alpha _{n}\varepsilon \text{,}
\label{eqn27}
\end{eqnarray}%
\begin{eqnarray}
\left\Vert y_{n}^{1}-v_{n}^{1}\right\Vert &=&\left\Vert \left( 1-\beta
_{n}^{1}\right) \left( y_{n}^{2}-v_{n}^{2}\right) +\beta _{n}^{1}\left(
Ty_{n}^{2}-\widetilde{T}v_{n}^{2}\right) \right\Vert  \notag \\
&\leq &\left( 1-\beta _{n}^{1}\right) \left\Vert
y_{n}^{2}-v_{n}^{2}\right\Vert +\beta _{n}^{1}\left\Vert Ty_{n}^{2}-%
\widetilde{T}v_{n}^{2}\right\Vert  \notag \\
&\leq &\left( 1-\beta _{n}^{1}\right) \left\Vert
y_{n}^{2}-v_{n}^{2}\right\Vert +\beta _{n}^{1}\left\Vert
Ty_{n}^{2}-Tv_{n}^{2}\right\Vert +\beta _{n}^{1}\left\Vert Tv_{n}^{2}-%
\widetilde{T}v_{n}^{2}\right\Vert  \notag \\
&\leq &\left( 1-\beta _{n}^{1}\right) \left\Vert
y_{n}^{2}-v_{n}^{2}\right\Vert +\beta _{n}^{1}\delta \left\Vert
y_{n}^{2}-v_{n}^{2}\right\Vert +\beta _{n}^{1}\varphi \left( \left\Vert
y_{n}^{2}-Ty_{n}^{2}\right\Vert \right) +\beta _{n}^{1}\varepsilon  \notag \\
&=&\left[ 1-\beta _{n}^{1}\left( 1-\delta \right) \right] \left\Vert
y_{n}^{2}-v_{n}^{2}\right\Vert +\beta _{n}^{1}\varphi \left( \left\Vert
y_{n}^{2}-Ty_{n}^{2}\right\Vert \right) +\beta _{n}^{1}\varepsilon \text{,}
\label{eqn28}
\end{eqnarray}%
\begin{eqnarray}
\left\Vert y_{n}^{2}-v_{n}^{2}\right\Vert &=&\left\Vert \left( 1-\beta
_{n}^{2}\right) \left( y_{n}^{3}-v_{n}^{3}\right) +\beta _{n}^{2}\left(
Ty_{n}^{3}-\widetilde{T}v_{n}^{3}\right) \right\Vert  \notag \\
&\leq &\left( 1-\beta _{n}^{2}\right) \left\Vert
y_{n}^{3}-v_{n}^{3}\right\Vert +\beta _{n}^{2}\left\Vert Ty_{n}^{3}-%
\widetilde{T}v_{n}^{3}\right\Vert  \notag \\
&\leq &\left( 1-\beta _{n}^{2}\right) \left\Vert
y_{n}^{3}-v_{n}^{3}\right\Vert +\beta _{n}^{2}\left\Vert
Ty_{n}^{3}-Tv_{n}^{3}\right\Vert +\beta _{n}^{2}\left\Vert Tv_{n}^{3}-%
\widetilde{T}v_{n}^{3}\right\Vert  \notag \\
&\leq &\left( 1-\beta _{n}^{2}\right) \left\Vert
y_{n}^{3}-v_{n}^{3}\right\Vert +\beta _{n}^{2}\delta \left\Vert
y_{n}^{3}-v_{n}^{3}\right\Vert +\beta _{n}^{2}\varphi \left( \left\Vert
y_{n}^{3}-Ty_{n}^{3}\right\Vert \right) +\beta _{n}^{2}\varepsilon  \notag \\
&=&\left[ 1-\beta _{n}^{2}\left( 1-\delta \right) \right] \left\Vert
y_{n}^{3}-v_{n}^{3}\right\Vert +\beta _{n}^{2}\varphi \left( \left\Vert
y_{n}^{3}-Ty_{n}^{3}\right\Vert \right) +\beta _{n}^{2}\varepsilon \text{.}
\label{eqn29}
\end{eqnarray}%
Combining (2.3), (2.4) and (2.5) we obtain%
\begin{eqnarray}
\left\Vert x_{n+1}-u_{n+1}\right\Vert &\leq &\left[ 1-\alpha _{n}\left(
1-\delta \right) \right] \left[ 1-\beta _{n}^{1}\left( 1-\delta \right) %
\right] \left[ 1-\beta _{n}^{2}\left( 1-\delta \right) \right] \left\Vert
y_{n}^{3}-v_{n}^{3}\right\Vert  \notag \\
&&+\left[ 1-\alpha _{n}\left( 1-\delta \right) \right] \left[ 1-\beta
_{n}^{1}\left( 1-\delta \right) \right] \beta _{n}^{2}\varphi \left(
\left\Vert y_{n}^{3}-Ty_{n}^{3}\right\Vert \right)  \notag \\
&&+\left[ 1-\alpha _{n}\left( 1-\delta \right) \right] \left[ 1-\beta
_{n}^{1}\left( 1-\delta \right) \right] \beta _{n}^{2}\varepsilon  \notag \\
&&+\left[ 1-\alpha _{n}\left( 1-\delta \right) \right] \beta _{n}^{1}\varphi
\left( \left\Vert y_{n}^{2}-Ty_{n}^{2}\right\Vert \right)  \notag \\
&&+\left[ 1-\alpha _{n}\left( 1-\delta \right) \right] \beta
_{n}^{1}\varepsilon +\alpha _{n}\varphi \left( \left\Vert
y_{n}^{1}-Ty_{n}^{1}\right\Vert \right) +\alpha _{n}\varepsilon \text{.}
\label{eqn30}
\end{eqnarray}%
Thus, by induction, we get%
\begin{eqnarray}
\left\Vert x_{n+1}-u_{n+1}\right\Vert &\leq &\left[ 1-\alpha _{n}\left(
1-\delta \right) \right]  \notag \\
&&\left[ 1-\beta _{n}^{1}\left( 1-\delta \right) \right] \cdots \left[
1-\beta _{n}^{k-2}\left( 1-\delta \right) \right] \left\Vert
y_{n}^{k-1}-v_{n}^{k-1}\right\Vert  \notag \\
&&+\left[ 1-\alpha _{n}\left( 1-\delta \right) \right]  \notag \\
&&\left[ 1-\beta _{n}^{1}\left( 1-\delta \right) \right] \cdots \left[
1-\beta _{n}^{k-3}\left( 1-\delta \right) \right] \beta _{n}^{k-2}\varphi
\left( \left\Vert y_{n}^{k-1}-Ty_{n}^{k-1}\right\Vert \right)  \notag \\
&&+\cdots +\left[ 1-\alpha _{n}\left( 1-\delta \right) \right] \beta
_{n}^{1}\varphi \left( \left\Vert y_{n}^{2}-Ty_{n}^{2}\right\Vert \right)
+\alpha _{n}\varphi \left( \left\Vert y_{n}^{1}-Ty_{n}^{1}\right\Vert \right)
\notag \\
&&+\left[ 1-\alpha _{n}\left( 1-\delta \right) \right] \left[ 1-\beta
_{n}^{1}\left( 1-\delta \right) \right] \cdots \left[ 1-\beta
_{n}^{k-3}\left( 1-\delta \right) \right] \beta _{n}^{k-2}\varepsilon  \notag
\\
&&+\cdots +\left[ 1-\alpha _{n}\left( 1-\delta \right) \right] \beta
_{n}^{1}\varepsilon +\alpha _{n}\varepsilon \text{.}  \label{eqn31}
\end{eqnarray}%
Again using (1.4), (2.1) and (2.2) we get%
\begin{eqnarray}
\left\Vert y_{n}^{k-1}-v_{n}^{k-1}\right\Vert &=&\left\Vert \left( 1-\beta
_{n}^{k-1}\right) \left( x_{n}-u_{n}\right) +\beta _{n}^{k-1}\left( Tx_{n}-%
\widetilde{T}u_{n}\right) \right\Vert  \notag \\
&\leq &\left( 1-\beta _{n}^{k-1}\right) \left\Vert x_{n}-u_{n}\right\Vert
+\beta _{n}^{k-1}\left\Vert Tx_{n}-\widetilde{T}u_{n}\right\Vert  \notag \\
&\leq &\left( 1-\beta _{n}^{k-1}\right) \left\Vert x_{n}-u_{n}\right\Vert
+\beta _{n}^{k-1}\left\Vert Tx_{n}-Tu_{n}\right\Vert  \notag \\
&&+\beta _{n}^{k-1}\left\Vert Tu_{n}-\widetilde{T}u_{n}\right\Vert  \notag \\
&\leq &\left[ 1-\beta _{n}^{k-1}\left( 1-\delta \right) \right] \left\Vert
x_{n}-u_{n}\right\Vert +\beta _{n}^{k-1}\varphi \left( \left\Vert
x_{n}-Tx_{n}\right\Vert \right) +\beta _{n}^{k-1}\varepsilon \text{.}
\label{eqn32}
\end{eqnarray}%
Substituting (2.8) in (2.7) we have%
\begin{eqnarray}
\left\Vert x_{n+1}-u_{n+1}\right\Vert &\leq &\left[ 1-\alpha _{n}\left(
1-\delta \right) \right]  \notag \\
&&\left[ 1-\beta _{n}^{1}\left( 1-\delta \right) \right] \cdots \left[
1-\beta _{n}^{k-1}\left( 1-\delta \right) \right] \left\Vert
x_{n}-u_{n}\right\Vert  \notag \\
&&+\left[ 1-\alpha _{n}\left( 1-\delta \right) \right]  \notag \\
&&\left[ 1-\beta _{n}^{1}\left( 1-\delta \right) \right] \cdots \left[
1-\beta _{n}^{k-2}\left( 1-\delta \right) \right] \beta _{n}^{k-1}\varphi
\left( \left\Vert x_{n}-Tx_{n}\right\Vert \right)  \notag \\
&&+\left[ 1-\alpha _{n}\left( 1-\delta \right) \right]  \notag \\
&&\left[ 1-\beta _{n}^{1}\left( 1-\delta \right) \right] \cdots \left[
1-\beta _{n}^{k-3}\left( 1-\delta \right) \right] \beta _{n}^{k-2}\varphi
\left( \left\Vert y_{n}^{k-1}-Ty_{n}^{k-1}\right\Vert \right)  \notag \\
&&+\cdots +\left[ 1-\alpha _{n}\left( 1-\delta \right) \right] \beta
_{n}^{1}\varphi \left( \left\Vert y_{n}^{2}-Ty_{n}^{2}\right\Vert \right)
+\alpha _{n}\varphi \left( \left\Vert y_{n}^{1}-Ty_{n}^{1}\right\Vert \right)
\notag \\
&&+\left[ 1-\alpha _{n}\left( 1-\delta \right) \right] \left[ 1-\beta
_{n}^{1}\left( 1-\delta \right) \right] \cdots \left[ 1-\beta
_{n}^{k-2}\left( 1-\delta \right) \right] \beta _{n}^{k-1}\varepsilon  \notag
\\
&&+\cdots +\left[ 1-\alpha _{n}\left( 1-\delta \right) \right] \beta
_{n}^{1}\varepsilon +\alpha _{n}\varepsilon \text{.}  \label{eqn33}
\end{eqnarray}%
Since $\delta \in \left[ 0,1\right) $ and $\left\{ \alpha _{n}\right\}
_{n=0}^{\infty }$,$\left\{ \beta _{n}^{i}\right\} _{n=0}^{\infty }\subset %
\left[ 0,1\right) $ for $i=\overline{1,k-1}$, we have%
\begin{equation}
\left[ 1-\alpha _{n}\left( 1-\delta \right) \right] \left[ 1-\beta
_{n}^{1}\left( 1-\delta \right) \right] \cdots \left[ 1-\beta _{n}^{i}\left(
1-\delta \right) \right] \leq \left[ 1-\alpha _{n}\left( 1-\delta \right) %
\right] \text{.}  \label{eqn34}
\end{equation}%
Using inequality (2.10) and assumption (i) in (2.9) it follows%
\begin{eqnarray}
\left\Vert x_{n+1}-u_{n+1}\right\Vert &\leq &\left[ 1-\alpha _{n}\left(
1-\delta \right) \right] \left\Vert x_{n}-u_{n}\right\Vert  \notag \\
&&+\alpha _{n}\varphi \left( \left\Vert x_{n}-Tx_{n}\right\Vert \right)
+\alpha _{n}\varphi \left( \left\Vert y_{n}^{k-1}-Ty_{n}^{k-1}\right\Vert
\right)  \notag \\
&&+\cdots +\alpha _{n}\varphi \left( \left\Vert
y_{n}^{2}-Ty_{n}^{2}\right\Vert \right) +\alpha _{n}\varphi \left(
\left\Vert y_{n}^{1}-Ty_{n}^{1}\right\Vert \right)  \notag \\
&&+\alpha _{n}\varepsilon +\alpha _{n}\varepsilon +\cdots +\alpha
_{n}\varepsilon +\alpha _{n}\varepsilon  \notag \\
&=&\left[ 1-\alpha _{n}\left( 1-\delta \right) \right] \left\Vert
x_{n}-u_{n}\right\Vert  \notag \\
&&+\alpha _{n}\left( 1-\delta \right) \left\{ \frac{\varphi \left(
\left\Vert x_{n}-Tx_{n}\right\Vert \right) +\varphi \left( \left\Vert
y_{n}^{k-1}-Ty_{n}^{k-1}\right\Vert \right) }{1-\delta }\right.  \notag \\
&&\left. +\cdots +\frac{\varphi \left( \left\Vert
y_{n}^{1}-Ty_{n}^{1}\right\Vert \right) +k\varepsilon }{1-\delta }\right\} 
\text{.}  \label{eqn35}
\end{eqnarray}%
Define%
\begin{eqnarray*}
a_{n} &:&=\left\Vert x_{n}-u_{n}\right\Vert \text{,} \\
\mu _{n} &:&=\alpha _{n}\left( 1-\delta \right) \in \left( 0.1\right) \text{,%
} \\
\eta _{n} &:&=\left\{ \frac{\varphi \left( \left\Vert
x_{n}-Tx_{n}\right\Vert \right) +\varphi \left( \left\Vert
y_{n}^{k-1}-Ty_{n}^{k-1}\right\Vert \right) }{1-\delta }\right. \\
&&\left. +\cdots +\frac{\varphi \left( \left\Vert
y_{n}^{1}-Ty_{n}^{1}\right\Vert \right) +k\varepsilon }{1-\delta }\right\} 
\text{.}
\end{eqnarray*}%
From Theorem 1 it follows that $\lim_{n\rightarrow \infty }\left\Vert
x_{n}-p\right\Vert =0$. Since $T$ satisfies condition (1.4) and $Tp=p\in
F_{T}$,%
\begin{eqnarray}
0 &\leq &\left\Vert x_{n}-Tx_{n}\right\Vert  \notag \\
&\leq &\left\Vert x_{n}-p\right\Vert +\left\Vert Tp-Tx_{n}\right\Vert  \notag
\\
&\leq &\left\Vert x_{n}-p\right\Vert +\delta \left\Vert p-x_{n}\right\Vert
+\varphi \left( \left\Vert p-Tp\right\Vert \right)  \notag \\
&=&\left( 1+\delta \right) \left\Vert x_{n}-p\right\Vert \rightarrow 0\text{
as }n\rightarrow \infty \text{.}  \label{eqn36}
\end{eqnarray}%
Since $\beta _{n}^{i}\in \left[ 0,1\right) $, $\forall n\in 
\mathbb{N}
$, $i=\overline{1,k-1}$ and using (1.4) and (1.10) we have%
\begin{eqnarray}
0 &\leq &\left\Vert y_{n}^{1}-Ty_{n}^{1}\right\Vert =\left\Vert
y_{n}^{1}-p+p-Ty_{n}^{1}\right\Vert  \notag \\
&\leq &\left\Vert y_{n}^{1}-p\right\Vert +\left\Vert Tp-Ty_{n}^{1}\right\Vert
\notag \\
&\leq &\left\Vert y_{n}^{1}-p\right\Vert +\delta \left\Vert
p-y_{n}^{1}\right\Vert +\varphi \left( \left\Vert p-Tp\right\Vert \right) 
\notag \\
&=&\left( 1+\delta \right) \left\Vert y_{n}^{1}-p\right\Vert  \notag \\
&=&\left( 1+\delta \right) \left\Vert \left( 1-\beta _{n}^{1}\right)
y_{n}^{2}+\beta _{n}^{1}Ty_{n}^{2}-p\left( 1-\beta _{n}^{1}+\beta
_{n}^{1}\right) \right\Vert  \notag \\
&\leq &\left( 1+\delta \right) \left\{ \left( 1-\beta _{n}^{1}\right)
\left\Vert y_{n}^{2}-p\right\Vert +\beta _{n}^{1}\left\Vert
Ty_{n}^{2}-Tp\right\Vert \right\}  \notag \\
&\leq &\left( 1+\delta \right) \left\{ \left( 1-\beta _{n}^{1}\right)
\left\Vert y_{n}^{2}-p\right\Vert +\beta _{n}^{1}\delta \left\Vert
y_{n}^{2}-p\right\Vert \right\}  \notag \\
&=&\left( 1+\delta \right) \left[ 1-\beta _{n}^{1}\left( 1-\delta \right) %
\right] \left\Vert y_{n}^{2}-p\right\Vert  \notag \\
&\leq &\cdots  \notag \\
&\leq &\left( 1+\delta \right) \left[ 1-\beta _{n}^{1}\left( 1-\delta
\right) \right] \cdots \left[ 1-\beta _{n}^{k-2}\left( 1-\delta \right) %
\right] \left\Vert y_{n}^{k-1}-p\right\Vert  \notag \\
&\leq &\left( 1+\delta \right) \left[ 1-\beta _{n}^{1}\left( 1-\delta
\right) \right] \cdots \left[ 1-\beta _{n}^{k-1}\left( 1-\delta \right) %
\right] \left\Vert x_{n}-p\right\Vert  \notag \\
&\leq &\left( 1+\delta \right) \left\Vert x_{n}-p\right\Vert \rightarrow 0%
\text{ as }n\rightarrow \infty \text{.}  \label{eqn37}
\end{eqnarray}%
It is easy to see from (2.13) that this result is also valid for $\left\Vert
Ty_{n}^{2}-y_{n}^{2}\right\Vert ,\ldots ,\left\Vert
Ty_{n}^{k-1}-y_{n}^{k-1}\right\Vert $.

Since $\varphi $ is continuous, we have%
\begin{eqnarray}
&&\lim_{n\rightarrow \infty }\varphi \left( \left\Vert
x_{n}-Tx_{n}\right\Vert \right)  \notag \\
&=&\lim_{n\rightarrow \infty }\varphi \left( \left\Vert
y_{n}^{1}-Ty_{n}^{1}\right\Vert \right) =\cdots =\lim_{n\rightarrow \infty
}\varphi \left( \left\Vert y_{n}^{k-1}-Ty_{n}^{k-1}\right\Vert \right) =0%
\text{.}  \label{eqn38}
\end{eqnarray}%
Hence an application of Lemma 1 to (2.11) leads to%
\begin{equation}
\left\Vert p-q\right\Vert \leq \frac{k\varepsilon }{1-\delta }\text{.}
\label{eqn39}
\end{equation}
\end{proof}

As shown by Hussain et al. (\cite{Hussain}, Theorem 8), in an arbitrary
Banach space $X$, the S-iteration $\left\{ x_{n}\right\} _{n=0}^{\infty }$
given by (1.6) converges to the fixed point of $T$, where $T:E\rightarrow E$
is a mapping satisfying condition (1.3).

\begin{theorem}
Let $T:E\rightarrow E$ be a map satisfying $\left( 1.4\right) $ with $%
F_{T}\neq \emptyset $ and $\left\{ x_{n}\right\} _{n=0}^{\infty }$ be
defined by $(1.6)$ with real sequences $\left\{ \beta _{n}\right\}
_{n=0}^{\infty }$, $\left\{ \alpha _{n}\right\} _{n=0}^{\infty }\subset %
\left[ 0,1\right) $ satisfying $\sum\limits_{n=0}^{\infty }\alpha
_{n}=\infty $. Then the sequence $\left\{ x_{n}\right\} _{n=0}^{\infty }$
converges to the unique fixed point of $T$.
\end{theorem}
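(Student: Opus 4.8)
The plan is to compare the iterates defined by $(1.6)$ directly with the fixed point and to extract a genuine geometric contraction from $(1.4)$. By the remark following $(1.4)$, a map satisfying $(1.4)$ has at most one fixed point, and since $F_{T}\neq\emptyset$ we may let $p=Tp$ denote the unique fixed point; it then suffices to prove $\left\Vert x_{n}-p\right\Vert\rightarrow 0$. The key observation is that, although $(1.4)$ is not symmetric in its two arguments, nothing prevents us from placing $p$ in the \emph{first} slot: putting $x=p$, $y=x_{n}$ in $(1.4)$ gives
\[
\left\Vert Tp-Tx_{n}\right\Vert\leq\delta\left\Vert p-x_{n}\right\Vert+\varphi\left(\left\Vert p-Tp\right\Vert\right)=\delta\left\Vert x_{n}-p\right\Vert,
\]
since $\varphi(0)=0$. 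The awkward $\varphi$-term therefore disappears, and the same substitution works with $y_{n}$ in place of $x_{n}$.

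First I would estimate $\left\Vert y_{n}-p\right\Vert$. Writing $y_{n}-p=(1-\beta_{n})(x_{n}-p)+\beta_{n}(Tx_{n}-Tp)$ and combining the triangle inequality with the bound just obtained yields
\[
\left\Vert y_{n}-p\right\Vert\leq\left[1-\beta_{n}(1-\delta)\right]\left\Vert x_{n}-p\right\Vert.
\]
Next I would treat $x_{n+1}-p=(1-\alpha_{n})(Tx_{n}-Tp)+\alpha_{n}(Ty_{n}-Tp)$ in the same way, applying the contraction bound to both $Tx_{n}-Tp$ and $Ty_{n}-Tp$ and then substituting the estimate for $\left\Vert y_{n}-p\right\Vert$. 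The two convex weights telescope, and a short computation gives
\[
\left\Vert x_{n+1}-p\right\Vert\leq\delta\left[1-\alpha_{n}\beta_{n}(1-\delta)\right]\left\Vert x_{n}-p\right\Vert\leq\delta\left\Vert x_{n}-p\right\Vert.
\]

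Since $\delta\in[0,1)$, iterating this inequality gives $\left\Vert x_{n+1}-p\right\Vert\leq\delta^{\,n+1}\left\Vert x_{0}-p\right\Vert\rightarrow 0$, which is exactly the asserted convergence; equivalently one may invoke Lemma~1 with $a_{n}=\left\Vert x_{n}-p\right\Vert$, $\mu_{n}=1-\delta$ and $\eta_{n}=0$. I do not anticipate any real obstacle: the only step demanding care is orienting $(1.4)$ so that the fixed point occupies the first argument, which annihilates the $\varphi$-contribution, after which the proof is the familiar convex-combination telescoping driven by the contractive constant $\delta$. It is worth noting that the hypothesis $\sum\alpha_{n}=\infty$ is not actually used once the true contraction factor $\delta<1$ has been exposed; it appears to be retained only for parallelism with the cited convergence theorems.
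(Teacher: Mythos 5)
Your proof is correct. The paper itself omits the argument, deferring to Theorem 8 of Hussain et al., and what you write is exactly the standard argument that reference uses: placing the fixed point $p$ in the \emph{first} slot of $(1.4)$ kills the $\varphi$-term since $\varphi\left(\left\Vert p-Tp\right\Vert\right)=\varphi(0)=0$, and because the S-iteration applies $T$ to both $x_{n}$ and $y_{n}$ in the recursion for $x_{n+1}$, the factor $\delta$ multiplies the entire convex combination, yielding the genuine geometric bound $\left\Vert x_{n+1}-p\right\Vert\leq\delta\left\Vert x_{n}-p\right\Vert$. Your closing observation is also accurate: the hypothesis $\sum\alpha_{n}=\infty$ plays no role here (it would be needed for a Mann-type scheme, where $x_{n+1}=\left(1-\alpha_{n}\right)x_{n}+\alpha_{n}Ty_{n}$ leaves the first term uncontracted), and appears in the statement only for uniformity with the other theorems; the only caveat to your alternative appeal to Lemma 1 is the degenerate case $\delta=0$, where $\mu_{n}=1-\delta\notin\left(0,1\right)$, but your direct iteration $\left\Vert x_{n+1}-p\right\Vert\leq\delta^{\,n+1}\left\Vert x_{0}-p\right\Vert$ covers that case anyway.
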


\begin{proof}
The argument is similar to the proof of Theorem 8 of \cite{Hussain}, and is
thus omitted.
\end{proof}

We now prove result on data dependence for the S-iterative procedure by
utilizing Theorem 3.

\begin{theorem}
Let $T$, $\widetilde{T}$ be two operators as in Theorem 2. Let $\left\{
x_{n}\right\} _{n=0}^{\infty }$, $\left\{ u_{n}\right\} _{n=0}^{\infty }$ be
S-iterations defined by (1.6) and with real sequences $\left\{ \beta
_{n}\right\} _{n=0}^{\infty }$, $\left\{ \alpha _{n}\right\} _{n=0}^{\infty
}\subset \left[ 0,1\right) $ satisfying (i) $\frac{1}{2}\leq \alpha _{n}$, $%
\forall n\in 
\mathbb{N}
$, and (ii) $\sum\limits_{n=0}^{\infty }\alpha _{n}=\infty $. If $p=Tp$\ and 
$q=\widetilde{T}q$, then we have%
\begin{equation*}
\left\Vert p-q\right\Vert \leq \frac{3\varepsilon }{1-\delta }\text{.}
\end{equation*}
\end{theorem}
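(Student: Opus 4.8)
The plan is to mirror the proof of Theorem 2, now specialized to the two-step S-iteration and exploiting the extra hypothesis $\tfrac12\le\alpha_n$. First I would write the two S-iterations side by side: the sequence $\{x_n\}$ governed by $T$ as in $(1.6)$, and the companion sequence $\{u_n\}$ governed by $\widetilde T$, namely $u_{n+1}=(1-\alpha_n)\widetilde T u_n+\alpha_n\widetilde T v_n$ with $v_n=(1-\beta_n)u_n+\beta_n\widetilde T u_n$. Everything is driven by $(1.4)$, Definition 1, and the triangle inequality in the form $\|Tz-\widetilde T w\|\le\|Tz-Tw\|+\|Tw-\widetilde T w\|\le\delta\|z-w\|+\varphi(\|z-Tz\|)+\varepsilon$.

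Next I would estimate $\|x_{n+1}-u_{n+1}\|$: splitting the convex combination and applying the displayed inequality to each of $\|Tx_n-\widetilde T u_n\|$ and $\|Ty_n-\widetilde T v_n\|$ bounds it in terms of $\|x_n-u_n\|$, $\|y_n-v_n\|$, the quantities $\varphi(\|x_n-Tx_n\|)$ and $\varphi(\|y_n-Ty_n\|)$, and $\varepsilon$. I would then bound $\|y_n-v_n\|$ the same way, getting $\|y_n-v_n\|\le[1-\beta_n(1-\delta)]\|x_n-u_n\|+\beta_n\varphi(\|x_n-Tx_n\|)+\beta_n\varepsilon$. Substituting this and collecting terms, the coefficient of $\|x_n-u_n\|$ becomes $\delta[1-\alpha_n\beta_n(1-\delta)]$, while the $\varepsilon$-coefficient becomes $1+\alpha_n\delta\beta_n$.

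The key algebraic step is the inequality $\delta[1-\alpha_n\beta_n(1-\delta)]\le 1-\alpha_n(1-\delta)$, which after dividing by $1-\delta>0$ reduces to $\alpha_n(1-\delta\beta_n)\le1$ and so holds because $\alpha_n<1$ and $0\le1-\delta\beta_n\le1$. This puts the recursion into the shape $\|x_{n+1}-u_{n+1}\|\le[1-\alpha_n(1-\delta)]\|x_n-u_n\|+\alpha_n(1-\delta)\eta_n$ required by Lemma 1, with $\mu_n=\alpha_n(1-\delta)\in(0,1)$ and $\sum\mu_n=\infty$ by hypothesis (ii); here $\eta_n$ collects the two $\varphi$-terms and the $\varepsilon$-term, all divided by $\alpha_n(1-\delta)$. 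This division is where hypothesis (i) enters decisively: the $\varepsilon$-coefficient $1+\alpha_n\delta\beta_n$ becomes $\tfrac{1}{1-\delta}\big(\tfrac{1}{\alpha_n}+\delta\beta_n\big)$, and $\tfrac12\le\alpha_n$ forces $\tfrac{1}{\alpha_n}\le2$, so with $\delta\beta_n<1$ the bracket is $<3$, producing the constant $3$ in the conclusion.

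Finally I would check that the $\varphi$-terms of $\eta_n$ vanish in the limit. By Theorem 3, $x_n\to p$, so as in $(2.12)$ we have $\|x_n-Tx_n\|\le(1+\delta)\|x_n-p\|\to0$; moreover $\|y_n-p\|\le[1-\beta_n(1-\delta)]\|x_n-p\|\to0$, whence $\|y_n-Ty_n\|\le(1+\delta)\|y_n-p\|\to0$. Since $\tfrac12\le\alpha_n$ keeps the $\varphi$-coefficients in $\eta_n$ bounded and $\varphi$ is continuous with $\varphi(0)=0$, both $\varphi$-contributions tend to $0$, giving $\limsup_n\eta_n\le\frac{3\varepsilon}{1-\delta}$. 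Lemma 1 then yields $\limsup_n\|x_n-u_n\|\le\frac{3\varepsilon}{1-\delta}$, and combining this with the convergences $x_n\to p$ and $u_n\to q$ via $\|p-q\|\le\|p-x_n\|+\|x_n-u_n\|+\|u_n-q\|$ gives the claimed bound. I expect the only real obstacle to be the bookkeeping that isolates the factor $1/\alpha_n$ in the $\varepsilon$-coefficient so that condition (i) converts it into $3$; the rest is a routine specialization of the argument behind Theorem 2.
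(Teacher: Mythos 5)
Your proposal matches the paper's proof in all essentials: the same pair of parallel iterations for $T$ and $\widetilde T$, the same estimates for $\left\Vert x_{n+1}-u_{n+1}\right\Vert$ and $\left\Vert y_{n}-v_{n}\right\Vert$, the same use of hypothesis (i) to produce the constant $3$, and the same appeal to Theorem 3 and Lemma 1. The only cosmetic differences are that you keep the exact coefficients $\delta\left[ 1-\alpha _{n}\beta _{n}\left( 1-\delta \right) \right]$ and $1+\alpha _{n}\delta \beta _{n}$ and divide by $\alpha _{n}\left( 1-\delta \right)$ before invoking $\tfrac{1}{2}\leq \alpha _{n}$, whereas the paper first bounds the coefficients by $2\alpha _{n}$, $\alpha _{n}$ and $3\alpha _{n}$ and then divides; and you make explicit the final passage from $\lim \sup \left\Vert x_{n}-u_{n}\right\Vert$ to $\left\Vert p-q\right\Vert$, which the paper leaves implicit.
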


\begin{proof}
For a given $x_{0}\in C$ and $u_{0}\in C$ we consider following iteration
for $T$ and $\widetilde{T}$:%
\begin{equation}
\left\{ 
\begin{array}{c}
x_{0}\in E\text{, \ \ \ \ \ \ \ \ \ \ \ \ \ \ \ \ \ \ \ \ \ \ \ \ \ \ \ \ \
\ \ \ \ \ \ \ \ \ \ \ \ } \\ 
x_{n+1}=\left( 1-\alpha _{n}\right) Tx_{n}+\alpha _{n}Ty_{n}\text{, \ \ \ \
\ \ \ \ \ \ \ \ } \\ 
y_{n}=\left( 1-\beta _{n}\right) x_{n}+\beta _{n}Tx_{n}\text{,}~n\in 
\mathbb{N}%
\end{array}%
\right.  \label{eqn40}
\end{equation}%
and%
\begin{equation}
\left\{ 
\begin{array}{c}
x_{0}\in E\text{, \ \ \ \ \ \ \ \ \ \ \ \ \ \ \ \ \ \ \ \ \ \ \ \ \ \ \ \ \
\ \ \ \ \ \ \ \ \ \ \ \ } \\ 
u_{n+1}=\left( 1-\alpha _{n}\right) \widetilde{T}u_{n}+\alpha _{n}\widetilde{%
T}v_{n}\text{, \ \ \ \ \ \ \ \ \ \ \ \ } \\ 
v_{n}=\left( 1-\beta _{n}\right) u_{n}+\beta _{n}\widetilde{T}u_{n}\text{,}%
~n\in 
\mathbb{N}
\text{.}%
\end{array}%
\right.  \label{eqn41}
\end{equation}%
Using (1.4), (2.16) and (2.17), we obtain the following estimates%
\begin{eqnarray}
\left\Vert x_{n+1}-u_{n+1}\right\Vert &=&\left\Vert \left( 1-\alpha
_{n}\right) \left( Tx_{n}-\widetilde{T}u_{n}\right) +\alpha _{n}\left(
Ty_{n}-\widetilde{T}v_{n}\right) \right\Vert  \notag \\
&\leq &\left( 1-\alpha _{n}\right) \left\Vert Tx_{n}-\widetilde{T}%
u_{n}\right\Vert +\alpha _{n}\left\Vert Ty_{n}-\widetilde{T}v_{n}\right\Vert
\notag \\
&=&\left( 1-\alpha _{n}\right) \left\Vert Tx_{n}-Tu_{n}+Tu_{n}-\widetilde{T}%
u_{n}\right\Vert  \notag \\
&&+\alpha _{n}\left\Vert Ty_{n}-Tv_{n}+Tv_{n}-\widetilde{T}v_{n}\right\Vert 
\notag \\
&\leq &\left( 1-\alpha _{n}\right) \left\{ \left\Vert
Tx_{n}-Tu_{n}\right\Vert +\left\Vert Tu_{n}-\widetilde{T}u_{n}\right\Vert
\right\}  \notag \\
&&+\alpha _{n}\left\{ \left\Vert Ty_{n}-Tv_{n}\right\Vert +\left\Vert Tv_{n}-%
\widetilde{T}v_{n}\right\Vert \right\}  \notag \\
&\leq &\left( 1-\alpha _{n}\right) \left\{ \delta \left\Vert
x_{n}-u_{n}\right\Vert +\varphi \left( \left\Vert x_{n}-Tx_{n}\right\Vert
\right) +\varepsilon \right\}  \notag \\
&&+\alpha _{n}\left\{ \delta \left\Vert y_{n}-v_{n}\right\Vert +\varphi
\left( \left\Vert y_{n}-Ty_{n}\right\Vert \right) +\varepsilon \right\} 
\text{,}  \label{eqn42}
\end{eqnarray}%
\begin{eqnarray}
\left\Vert y_{n}-v_{n}\right\Vert &=&\left\Vert \left( 1-\beta _{n}\right)
\left( x_{n}-u_{n}\right) +\beta _{n}\left( Tx_{n}-\widetilde{T}u_{n}\right)
\right\Vert  \notag \\
&\leq &\left( 1-\beta _{n}\right) \left\Vert x_{n}-u_{n}\right\Vert +\beta
_{n}\left\Vert Tx_{n}-\widetilde{T}u_{n}\right\Vert  \notag \\
&=&\left( 1-\beta _{n}\right) \left\Vert x_{n}-u_{n}\right\Vert +\beta
_{n}\left\Vert Tx_{n}-Tu_{n}+Tu_{n}-\widetilde{T}u_{n}\right\Vert  \notag \\
&\leq &\left( 1-\beta _{n}\right) \left\Vert x_{n}-u_{n}\right\Vert +\beta
_{n}\left\{ \left\Vert Tx_{n}-Tu_{n}\right\Vert +\left\Vert Tu_{n}-%
\widetilde{T}u_{n}\right\Vert \right\}  \notag \\
&\leq &\left( 1-\beta _{n}\right) \left\Vert x_{n}-u_{n}\right\Vert +\beta
_{n}\left\{ \delta \left\Vert x_{n}-u_{n}\right\Vert +\varphi \left(
\left\Vert x_{n}-Tx_{n}\right\Vert \right) +\varepsilon \right\}  \notag \\
&=&\left[ 1-\beta _{n}\left( 1-\delta \right) \right] \left\Vert
x_{n}-u_{n}\right\Vert +\beta _{n}\varphi \left( \left\Vert
x_{n}-Tx_{n}\right\Vert \right) +\beta _{n}\varepsilon \text{.}
\label{eqn43}
\end{eqnarray}%
Combining (2.18) and (2.19),%
\begin{eqnarray}
\left\Vert x_{n+1}-u_{n+1}\right\Vert &\leq &\left\{ \left( 1-\alpha
_{n}\right) \delta +\alpha _{n}\delta \left[ 1-\beta _{n}\left( 1-\delta
\right) \right] \right\} \left\Vert x_{n}-u_{n}\right\Vert  \notag \\
&&+\left\{ 1-\alpha _{n}+\alpha _{n}\delta \beta _{n}\right\} \varphi \left(
\left\Vert x_{n}-Tx_{n}\right\Vert \right) +\alpha _{n}\varphi \left(
\left\Vert y_{n}-Ty_{n}\right\Vert \right)  \notag \\
&&+\alpha _{n}\delta \beta _{n}\varepsilon +\left( 1-\alpha _{n}\right)
\varepsilon +\alpha _{n}\varepsilon \text{.}  \label{eqn44}
\end{eqnarray}%
For $\left\{ \alpha _{n}\right\} _{n=0}^{\infty }~$, $\left\{ \beta
_{n}\right\} _{n=0}^{\infty }\subset \left[ 0,1\right) $ and $\delta \in %
\left[ 0,1\right) $%
\begin{equation}
\left( 1-\alpha _{n}\right) \delta <1-\alpha _{n}\text{, }1-\beta _{n}\left(
1-\delta \right) <1\text{, }\alpha _{n}\delta \beta _{n}<\alpha _{n}\text{.}
\label{eqn45}
\end{equation}%
It follows from assumption (i) that%
\begin{equation}
1-\alpha _{n}<\alpha _{n}\text{,}\forall n\in 
\mathbb{N}
\text{.}  \label{eqn46}
\end{equation}%
Therefore, combining (2.22) and (2.21) to (2.20) gives%
\begin{eqnarray}
\left\Vert x_{n+1}-u_{n+1}\right\Vert &\leq &\left[ 1-\alpha _{n}\left(
1-\delta \right) \right] \left\Vert x_{n}-u_{n}\right\Vert  \notag \\
&&+2\alpha _{n}\varphi \left( \left\Vert x_{n}-Tx_{n}\right\Vert \right)
+\alpha _{n}\varphi \left( \left\Vert y_{n}-Ty_{n}\right\Vert \right)  \notag
\\
&&+\alpha _{n}\varepsilon +\alpha _{n}\varepsilon +\alpha _{n}\varepsilon 
\text{,}  \label{eqn47}
\end{eqnarray}%
or, equivalently,%
\begin{eqnarray}
\left\Vert x_{n+1}-u_{n+1}\right\Vert &\leq &\left[ 1-\alpha _{n}\left(
1-\delta \right) \right] \left\Vert x_{n}-u_{n}\right\Vert  \notag \\
&&+\alpha _{n}\left( 1-\delta \right) \frac{\left\{ 2\varphi \left(
\left\Vert x_{n}-Tx_{n}\right\Vert \right) +\varphi \left( \left\Vert
y_{n}-Ty_{n}\right\Vert \right) +3\varepsilon \right\} }{1-\delta }\text{.}
\label{eqn48}
\end{eqnarray}%
Now define%
\begin{eqnarray*}
a_{n} &:&=\left\Vert x_{n}-u_{n}\right\Vert \text{,} \\
\eta _{n} &:&=\alpha _{n}\left( 1-\delta \right) \in \left( 0,1\right) \\
\rho _{n} &:&=\frac{2\varphi \left( \left\Vert x_{n}-Tx_{n}\right\Vert
\right) +\varphi \left( \left\Vert y_{n}-Ty_{n}\right\Vert \right)
+3\varepsilon }{1-\delta }\text{.}
\end{eqnarray*}%
From Theorem 3, we have $\lim_{n\rightarrow \infty }\left\Vert
x_{n}-p\right\Vert =0$. Since $T$ satisfies condition (1.4), and $Tp=p\in
F_{T}$, using an argument similar to that in the proof of Theorem 2%
\begin{equation}
\lim_{n\rightarrow \infty }\left\Vert x_{n}-Tx_{n}\right\Vert
=\lim_{n\rightarrow \infty }\left\Vert y_{n}-Ty_{n}\right\Vert =0\text{.}
\label{eqn49}
\end{equation}%
Using the fact that $\varphi $ is continuous we have 
\begin{equation}
\lim_{n\rightarrow \infty }\varphi \left( \left\Vert x_{n}-Tx_{n}\right\Vert
\right) =\lim_{n\rightarrow \infty }\varphi \left( \left\Vert
y_{n}-Ty_{n}\right\Vert \right) =0\text{.}  \label{eqn50}
\end{equation}%
An application of Lemma 1 to (2.24) leads to%
\begin{equation}
\left\Vert p-q\right\Vert \leq \frac{3\varepsilon }{1-\delta }\text{.}
\label{eqn51}
\end{equation}
\end{proof}

\section{Conclusion}

Since the iterative schemes (1.7) and (1.8) are special cases the iterative
process (1.10), Theorem 1 generalizes Theorem 2.1 of \cite{CR}, and Theorem
2.1 of \cite{Isa}. By taking $k=3$ and $k=2$ in Theorem 2, data dependence
results for the iterative schemes (1.7) and (1.8) can be easily obtained.
For $k=3$, Theorem 2 reduces to Theorem 3.2 of \cite{DataSP}. Since
condition (1.4) is more general than condition (1.3), Theorem 3 generalizes
Theorem 8 of \cite{Hussain}.

\begin{acknowledgement}
This work is supported by Y\i ld\i z Technical University Scientific
Research Projects Coordination Unit under project number BAPK
2012-07-03-DOP02.
\end{acknowledgement}

\end{document}